\documentclass[11pt]{amsart}

\usepackage{amsthm}
\usepackage{graphicx}
\usepackage{enumerate}
\usepackage{amsmath,amssymb,amscd,amsfonts}
\usepackage{mathtools}
\usepackage{color}
\usepackage{lineno}
\allowdisplaybreaks[4]

\newtheorem{theorem}{Theorem}[section]
\newtheorem{lemma}[theorem]{Lemma}

\theoremstyle{definition}

\theoremstyle{remark}
\newtheorem{remark}[theorem]{Remark}

\numberwithin{equation}{section}

\newcommand{\R}{\mathbb{R}}

\newcommand{\cF}{\mathcal{F}}

\newcommand{\cL}{\mathcal{L}}
\newcommand{\cN}{\mathcal{N}}

\newcommand{\al}{\alpha}
\newcommand{\be}{\beta}

\begin{document}


\title[A generalization of the Takagi function]{A generalization of the Takagi function for beta-expansions}

\author{Shintaro Suzuki}
\address{Department of Mathematics, 
Tokyo Gakugei University, 
4-1-1 Nukuikitamachi, 
Koganei-shi, 
Tokyo 184-8501,
Japan}
\email{shin05@u-gakugei.ac.jp}

\subjclass[2020]{37E05, 37A44, 37A50 \and 37D20}
\thanks{{\it Keywords}: Beta-expansions; Beta-maps; Takagi function.}

\begin{abstract}
We consider a generalized Takagi function for beta-expansions with the base $1<\be\leq2$, motivated by multifractal analysis for digit frequency sets of beta-expansions \cite{Su}. We show that it is pointwise  $\al$-H\"older continuous for any $\al\in(0,1)$ but not pointwise Lipschitz continuous on the unit interval except a Lebesgue null set. Our proof relies on a formula for the generalized Takagi function reflecting its oscillations of the sum of digits and some basic limit theorems for the corresponding beta-map.   
\end{abstract}

\maketitle
\section{Introduction}

The Takagi function $\mathcal{T}:[0,1]\to[0,1]$ is a simple example of a continuous but nowhere differentiable function defined by  
\[\mathcal{T}(x)=\sum_{n=1}^\infty\frac{T^n(x)}{2^n}\] for $x\in[0,1]$, where $T(x)$ is the tent map $T(x)=1-|1-2x|$ (see e.g., \cite{Al-Ka, La, Ta}). Its fractal properties have been well-studied, for example, non-Lipschitz continuity and $\alpha$-H\"older continuity for $0<\alpha<1$ \cite{Br-Ko, Sh-Sa}, a characterization of points where it has the infinite derivative \cite{Al-Ka0}, some properties of its level sets \cite{Al, La-Mo} 
and the limit theorems for its oscillations \cite{Ko}. For showing these properties, 
some different expressions of the Takagi function as described in \cite{Al-Ka, La} can be applicable, each of which has its own characteristics. In this paper, we focus on one of such expressions given by
\[\mathcal T(x)=\frac{d_1(x)}{2}+\sum_{n=2}^\infty \frac{d_n(x)}{2^n}\Bigl(n-2\sum_{i=1}^{n-1}d_i(x)\Bigr)\ \]
for $x\in[0,1]$, where $d_k(x)\in\{0,1\}$ denotes the $k$-th coefficient of the dyadic expansion of $x\in[0,1]$ with $x=\sum_{n=1}^\infty d_n(x)/2^n$ 
 (see e.g., \cite[Section 4]{Al-Ka}). We consider its generalization $G_\be$ for beta-expansions with the base $1<\be\leq 2$ (see Section 2 for the definition), which coincides with $\mathcal T$ when $\be=2$ (cf. \cite{Ma-Wo, Ok}). The function $G_\be$ naturally appears in the context of multifractal analysis of digit frequency functions for beta-expansions (see \cite[Section 6]{Su}).
 The author in \cite[Section 6]{Su} showed that $G_\be$ is continuous but nowhere differentiable on the unit interval, from which we can regard this function as a natural generalization of the Takagi function. As a consecutive work, in this paper, we show that the function $G_\be$ is pointwise $\alpha$-H\"older continuous for any $\alpha\in(0,1)$ on the unit interval except a Lebesgue null set (Theorem \ref{3-3}). A key ingredient of the proof is a certain formula for $G_\be$ reflecting the oscillations of the sum of digits (Lemma \ref{1}). Using this formula and the first Borell-Cantelli lemma, we show that the H\"older coefficient of $G_\be(x)$ at almost all $x\in[0,1]$ (in the sense of the Lebesgue measure) is bounded by a positive constant related to the orbit of $x$ by the corresponding beta-map. In addition, we also show that the function $G_\be$ is not pointwise Lipschitz continuous on the unit interval except a Lebesgue null set (Theorem \ref{4-1}). To show this, for almost all $x\in[0,1]$, we give a sequence of real numbers $(x_N)_{N=1}^\infty$ which converges to $x$ satisfying $|G_\be(x_{N+1})-G(x_N)|/|x_{N+1}-x_N|\to\infty$ as $N\to\infty$ via the central limit theorem for a beta-map, which was established for more general piecewise $C^2$ expanding maps on the interval (see e.g., \cite{Bo-Go, Ho-Ke}). 

This paper is organized as follows. In Section 2, we summarize notions related to beta-expansions and define the generalized Takagi function $G_\be$. In Section 3, we show that it is pointwise $\alpha$-H\"older continuous for any $\alpha\in(0,1)$ on $[0,1]$ except a Lebesgue null set using the first Borel-Cantelli lemma. 
In Section 4, we show that the function $G_\be$ is not pointwise Lipschitz continuous on $[0,1]$ except a Lebesgue null set via the central limit theorem for the dynamics of a beta-map.

\section{Preliminaries}
In this section, we recall some necessary notions for beta-expansions following  \cite{Bl, It-Ta,  Pa, Re}. 
For $1<\be\leq2$, the beta-map is defined by
\[\tau_\be(x)=
\be x-[\be x] 
\]
for $[0,1]$, where $[y]$ denotes the integer part of $y\geq0$. 
This map is known as a simple example of piecewise linear expanding maps of the interval and has a unique unique invariant probability measure $m_\be$ absolutely continuous with respect to the Lebesgue measure $\cL$ (see \cite{Re}). In addition, its Radon-Nikodym derivative has the form 
\begin{equation}\label{Radon}
    \frac{d m_\be}{d\cL}=\frac{1}{F_\be}\sum_{n=0}^\infty \frac{\chi_{[0,\tau_\beta^n(1)]}}{\be^n},
\end{equation}
where $\chi_A$ denotes the indicator function of $A$ and $F_\be$ denotes the normalizing constant (see e.g., \cite{Ge, Pa}). It is known that the measurable dynamics $(\tau_\beta, m_\be)$ is ergodic (see e.g., \cite{Pa}). In fact, it is exact (see \cite{Bo, Sm}). 
The map $\tau_\beta$ gives the greedy expansion of $x\in[0,1]$ as follows. The definition of $\tau_\beta$ gives
\[x=\frac{[\be x]}{\be}+\frac{\tau_\beta(x)}{\be}\]
for $x\in[0,1]$, which yields
\[\tau_\beta^{n}(x)=\frac{[\be \tau_\beta^n(x)]}{\be}+\frac{\tau_\beta^{n+1}(x)}{\be}\]
for $n\geq0$. 
Using the above equations inductively, we have
\[x=\sum_{n=1}^N\frac{[\be\tau_\beta^{n-1}(x)]}{\be^n}+\frac{\tau_\beta^N(x)}{\be^N}\]
for $N\geq1$. Set $g_n(\be,x)=[\be\tau_\beta^{n-1}(x)]$ for $n\geq1$. Taking $N\to\infty$ in the right side of the above equation yields the greedy expansion of $x$:  
\[x=\sum_{n=1}^\infty\frac{g_n(x)}{\be^n},\]
which is equal to the binary expansion if $\be=2$. 
A number $x\in[0,1]$ is said to be simple if there is a positive integer $n_0\geq1$ such that $\tau_\beta^{n_0-1}(x)=1/\be$. In this case, we have $g_n(x)=[\be\tau_{\beta}^{n-1}(x)]=0$ for all $n\geq n_0+1$. 

Related to multifractal analysis for digit frequency  sets of beta-expansions, the author in \cite[Section 6] {Su} defined a generalized Takagi function $G_\be$ for $1<\be\leq2$ by
\begin{equation}
G_\be(x)=\frac{g_1(x)}{\be}+\sum_{n=2}^\infty\frac{g_n(x)}{\be^{n}}\Biggl(n-\frac{\sum_{i=1}^{n-1}g_i(x)}{M_\be}\Biggr)
\end{equation}
for $x\in[0,1]$, where 
\[M_\be=\int_0^1 g_1(x) d m_\be=m_\be([1/\be,1]).\]
In \cite{Su}, it was shown that $G_2=\mathcal T$ and the function $G_\be$ is continuous but nowhere differentiable on the unit interval for any $1<\be\leq2$, which yields $G_\be$ can be regarded as a generalization of the Takagi function $\mathcal T$. 

The following formula for $G_\be$ plays an important role in the proof of the main results of the paper.
\begin{lemma}\label{1} We have 
\begin{equation}
    G_\be(x)=\frac{1}{M_\be}\Biggl(x+\sum_{n=1}^\infty\frac{g_n(x)}{\be^n}\Bigl(M_\be n-\sum_{i=1}^ng_i(x)\Bigr)\Biggr)
\end{equation}
    for $x\in[0,1]$.
\end{lemma}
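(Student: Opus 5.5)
This is a direct algebraic rearrangement. We expand the right-hand side using the greedy expansion $x=\sum_{n\ge1}g_n(x)/\be^n$ from Section 2 and compare it term by term with the definition of $G_\be$. Write $g_n=g_n(x)$ and $S_n=\sum_{i=1}^n g_i$. Everything converges absolutely, since $0\le g_n\le 1$ (because $\be\le2$) and $S_n\le n$. So each series is dominated by a constant multiple of $\sum n\be^{-n}<\infty$, and rearrangements and splittings are legitimate.

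Substituting the greedy expansion for $x$, the right-hand side of the lemma becomes
\[
\frac{1}{M_\be}\sum_{n=1}^\infty \frac{g_n}{\be^n}\bigl(1+M_\be n-S_n\bigr)
=\sum_{n=1}^\infty \frac{g_n}{\be^n}\Bigl(n-\frac{S_n-1}{M_\be}\Bigr).
\]
The key observation is that $g_n\in\{0,1\}$, since $\be\le 2$ gives $[\be y]\le 1$ for $y\in[0,1)$ and the case $y=1$ only affects $g_1$ at $x=1$. Hence $g_n^2=g_n$, and therefore
\[
g_n(S_n-1)=g_n S_{n-1}+g_n^2-g_n=g_nS_{n-1},
\]
with the convention $S_0=0$.

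Substituting this identity, the $n$-th term becomes $\frac{g_n}{\be^n}\bigl(n-S_{n-1}/M_\be\bigr)$. For $n=1$ this reduces to $g_1/\be$, which matches the first term in the definition of $G_\be$, and for $n\ge2$ the terms agree exactly with those of $G_\be$.

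The only point needing care is the value $x=1$. When $\be<2$ we have $g_1(1)=[\be]=1$, so $g_n\in\{0,1\}$ still holds. When $\be=2$, however, $g_1(1)=2$, and this case needs a separate check. Here $\tau_2(1)=0$, so $g_n(1)=0$ for $n\ge2$ and $S_1=2$. The definition gives $G_2(1)=2/2=1$, while the formula gives $\frac1{M}\bigl(1+\frac{2}{2}(M-2)\bigr)=1$. The identity therefore holds there as well, and this endpoint bookkeeping, together with justifying the use of $g_n^2=g_n$, is the only obstacle I expect.
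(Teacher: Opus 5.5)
Your argument is the paper's argument run in the opposite direction. The paper starts from the definition, rewrites $g_nS_{n-1}$ as $g_n(S_n-1)$ (i.e.\ uses $g_n^2=g_n$), absorbs $g_1/\be$ as the $n=1$ summand, and finally replaces $\sum_n g_n/\be^n$ by $x$. Your proof is correct for every $x$ whose digits all lie in $\{0,1\}$, which means all $x\in[0,1]$ if $\be<2$ and all $x\in[0,1)$ if $\be=2$.

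The endpoint check at $x=1$, $\be=2$ contains an arithmetic error, and the conclusion you draw from it is false. There $g_1=S_1=2$ and $g_n=0$ for $n\ge2$, so the right-hand side equals $\frac{1}{M_2}\bigl(1+(M_2-2)\bigr)=1-\frac{1}{M_2}$, which can never be $1$. Since $M_2=m_2([1/2,1])=1/2$, it equals $-1$, while the definition gives $G_2(1)=g_1/2=1$. In your notation, the $n=1$ term $\frac{g_1}{2}\bigl(1-\frac{S_1-1}{M_2}\bigr)$ is $-1$ rather than $g_1/2=1$, exactly because $g_1^2\neq g_1$ there.

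So the lemma as stated fails at this single point, and the correct statement is the restricted one above. Alternatively, you can use the non-greedy expansion $1=\sum_{n\ge1}2^{-n}$ at that point, which makes both sides equal to $0=\mathcal{T}(1)$. The paper's proof carries the same hidden restriction, since absorbing $g_1/\be$ into the $n=1$ summand also requires $g_1\in\{0,1\}$. This does not matter later, because the formula is only used at points of $(0,1)$. You were right to single out this point; you should record the failure instead of claiming a verification.
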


\begin{remark}
    In the case of $\be=2$, this formula is given in \cite[Section 4]{Al-Ka}. 
\end{remark}

\begin{proof}
Since 
\[g_n(x)\Biggl(n-\frac{\sum_{i=1}^{n-1}g_i(x)}{M_\be}\Biggr)=g_n(x)\Biggl(n-\frac{\sum_{i=1}^{n}g_i(x)}{M_\be}+\frac{1}{M_\be}\Biggr)\]
for $x\in[0,1]$ and $n\geq2$, we have
\begin{align*}
    G(x)&=\frac{g_1(x)}{\be}+\sum_{n=2}^\infty\frac{g_n(x)}{\be^{n}}\Biggl(n-\frac{\sum_{i=1}^{n-1}g_i(x)}{M_\be}\Biggr) \\
    &=\frac{g_1(x)}{\be}+\sum_{n=2}^\infty\frac{g_n(x)}{\be^{n}}\Biggl(n-\frac{\sum_{i=1}^{n}g_i(x)}{M_\be}+\frac{1}{M_\be}\Biggr) \\
    &=\sum_{n=1}^\infty\frac{g_n(x)}{\be^{n}}    \Biggl(n-\frac{\sum_{i=1}^{n}g_i(x)}{M_\be}+\frac{1}{M_\be}\Biggr) \\
    &=\frac{1}{M_\be}\sum_{n=1}^\infty\frac{g_n(x)}{\be^{n}}    \Biggl(M_\be n-\sum_{i=1}^{n}g_i(x)+1\Biggr)  \\
    &=\frac{1}{M_\be}\Biggl(x+\sum_{n=1}^\infty\frac{g_n(x)}{\be^n}\Bigl(M_\be n-\sum_{i=1}^ng_i(x)\Bigr)\Biggr)
    \end{align*}
for $x\in[0,1]$, as desired. 
\end{proof}

\section{H\"older continuity}
In this section, we show that the function $G_\be$ is pointwise $\alpha$-H\"older continuous for any $\al\in(0,1)$ on $[0,1]$ except some Lebesgue null set. For $x,y\in[0,1]$ with $x\neq y$, we define the orbit separation time $N(x,y)$ by 
\[N(x,y)=\min\{i\geq1; g_i(x)\neq g_i(y)\}.\]

We need the following two lemmas. 
\begin{lemma}\label{2}
    Let $x\in(0,1)$ be non-simple. 

    (1) For $y\in(0,1)$ with $x>y$, we have
    \[x-y\geq \frac{\tau_\beta^N(x)}{\be^N},\]
    where $N=N(x,y)$.

    (2) For $z\in(0,1)$ with $x<z$, we have
    \[z-x\geq \frac{1-\tau_\beta^{N}(x)}{\be^{N}},\]
    where $N=N(x,z)$.\end{lemma}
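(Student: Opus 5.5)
Both parts follow from the finite expansion $x=\sum_{n=1}^{N}g_n(x)\be^{-n}+\tau_\beta^N(x)\be^{-N}$ in Section 2, applied to $x$ and to the second point at the index $N-1$ (or $N$) where the digits first disagree. We also use the monotonicity of the greedy digits: if $u<v$ then the digit sequence of $u$ is lexicographically smaller than that of $v$. This holds because $\tau_\beta$ is increasing on each interval $[k/\be,(k+1)/\be)$ and $g_1=[\be\,\cdot\,]$ is nondecreasing, so the first differing digit is larger for the larger point.

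For (1), let $y<x$ and $N=N(x,y)$. Then $g_i(x)=g_i(y)$ for $i<N$, and by monotonicity $g_N(y)<g_N(x)$, so $g_N(x)=1$ and $g_N(y)=0$ since $\be\le2$. Write $x=\sum_{i<N}g_i(x)\be^{-i}+\tau_\beta^{N-1}(x)\be^{-(N-1)}$, and similarly for $y$. Then
\[x-y=\frac{\tau_\beta^{N-1}(x)-\tau_\beta^{N-1}(y)}{\be^{N-1}}.\]
Since $g_N(x)=1$, we have $\tau_\beta^{N-1}(x)=1/\be+\tau_\beta^N(x)/\be$. Since $g_N(y)=0$, we have $\tau_\beta^{N-1}(y)<1/\be$. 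Hence
\[x-y> \frac{\tau_\beta^N(x)/\be}{\be^{N-1}}=\frac{\tau_\beta^N(x)}{\be^N},\]
which is the claim.

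For (2), let $z>x$ and $N=N(x,z)$. By the same reasoning $g_N(x)=0$ and $g_N(z)=1$, so
\[z-x=\frac{\tau_\beta^{N-1}(z)-\tau_\beta^{N-1}(x)}{\be^{N-1}}.\]
From $g_N(z)=1$ we get $\tau_\beta^{N-1}(z)\ge1/\be$. From $g_N(x)=0$ we get $\tau_\beta^{N-1}(x)=\tau_\beta^N(x)/\be$. Therefore
\[z-x\ge\frac{1-\tau_\beta^N(x)}{\be^{N}},\]
as required.

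The main obstacle is the boundary behaviour: the point $1$, the point $1/\be$ where $[\be x]$ jumps, and the first differing digit potentially being $2$. The hypotheses handle this. All points lie in $(0,1)$, so $[\be\tau_\beta^{n}(\cdot)]\in\{0,1\}$ because $\be\tau_\beta^n<\be\le2$; even $\tau_\beta^n(\cdot)=1$ would only occur at the point $1$, which is excluded. Non-simplicity of $x$ is not actually needed for the inequalities above, but it guarantees $\tau_\beta^N(x)\in(0,1)$, so both bounds are strictly positive and the statement is nondegenerate. I would check these points briefly rather than ignore them.
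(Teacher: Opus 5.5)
Your proof is correct and is essentially the paper's argument. The paper subtracts the full expansions to get $x-y=\be^{-N}\bigl(1+\tau_\beta^N(x)-\tau_\beta^N(y)\bigr)$ and uses $\tau_\beta^N(y)\le 1$, which is the same as your level-$(N-1)$ identity with $\tau_\beta^{N-1}(y)=\tau_\beta^N(y)/\be<1/\be$ (symmetrically for (2)); you additionally justify, via monotonicity of $\tau_\beta$ on its branches, that the larger point has the larger first differing digit, which the paper simply asserts.
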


\begin{proof}
    (1) By the definition of the separation time $N=N(x,y)$, we know that $g_i(x)=g_i(y)$ for $1\leq i<N$ and $1=g_N(x)>g_N(y)=0$. Hence
    \begin{align*}
        x-y&=\sum_{n=1}^\infty\frac{g_n(x)}{\be^n}-\sum_{n=1}^\infty\frac{g_n(y)}{\be^n}    
        =\sum_{n=N}^\infty\frac{g_n(x)}{\be^n}-\sum_{n=N}^\infty\frac{g_n(y)}{\be^n}     \\
        &=\frac{1}{\be^N}+\sum_{n=N+1}^\infty\frac{g_n(x)}{\be^n}-\sum_{n=N+1}^\infty\frac{g_n(y)}{\be^n}     \\
        &=\frac{1}{\be^N}+\frac{\tau_\beta^N(x)}{\be^N}-\frac{\tau_\beta^N(y)}{\be^N}        \\
        &\geq\frac{\tau_\beta^N(x)}{\be^N},
        \end{align*}
        which ends the proof. 
        
    (2) By the definition of the separation time $N=N(x,z)$, we know that $g_i(x)=g_i(z)$ for $1\leq i<N$ and $0=g_N(x)<g_N(z)=1$. Hence
    \begin{align*}
        z-x&=\sum_{n=1}^\infty\frac{g_n(z)}{\be^n}-\sum_{n=1}^\infty\frac{g_n(x)}{\be^n} 
        =\sum_{n=N}^\infty\frac{g_n(z)}{\be^n}-\sum_{n=N}^\infty\frac{g_n(x)}{\be^n}     \\
        &=\frac{1}{\be^N}+\sum_{n=N+1}^\infty\frac{g_n(z)}{\be^n}-\sum_{n=N+1}^\infty\frac{g_n(x)}{\be^n}     \\
        &=\frac{1}{\be^N}+\frac{\tau_\beta^N(z)}{\be^N}-\frac{\tau_\beta^N(x)}{\be^N}        \\
        &\geq\frac{1-\tau_\beta^N(x)}{\be^N},
        \end{align*}
        which ends the proof. \end{proof}

\begin{lemma}\label{3}
    (1)  We have 
    \[\frac{1}{n}\log_{\be} \tau_\beta^n(x)\to 0\]
    as $n\to\infty$ for $\cL$-a.e. $x\in[0,1]$.

    (2) We have 
    \[\frac{1}{n}\log_{\be} (1-\tau_\beta^n(x))\to 0\]
    as $n\to\infty$ for $\cL$-a.e. $x\in[0,1]$.
    \end{lemma}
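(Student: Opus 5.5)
Since $0\le\tau_\beta^n(x)\le 1$, we have $\frac1n\log_\be\tau_\beta^n(x)\le 0$ and $\frac1n\log_\be(1-\tau_\beta^n(x))\le 0$. So in both parts only the liminf needs work: we must show that $\tau_\beta^n(x)$ and $1-\tau_\beta^n(x)$ are not exponentially small infinitely often. The plan is to apply the first Borel--Cantelli lemma to the measure $m_\be$, and then transfer the conclusion to $\cL$.

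Fix $\varepsilon>0$ and set
\[A_n=\{x\in[0,1];\ \tau_\beta^n(x)<\be^{-\varepsilon n}\},\qquad B_n=\{x\in[0,1];\ 1-\tau_\beta^n(x)<\be^{-\varepsilon n}\}.\]
Since $m_\be$ is $\tau_\beta$-invariant,
\[m_\be(A_n)=m_\be([0,\be^{-\varepsilon n}))\quad\text{and}\quad m_\be(B_n)=m_\be((1-\be^{-\varepsilon n},1]).\]
The density in \eqref{Radon} is bounded above by $\frac{1}{F_\be}\sum_{n\ge0}\be^{-n}=\frac{\be}{F_\be(\be-1)}$. Hence there is a constant $C_\be$ with
\[m_\be(A_n)\le C_\be\be^{-\varepsilon n}\quad\text{and}\quad m_\be(B_n)\le C_\be\be^{-\varepsilon n}.\]
Both series $\sum_n m_\be(A_n)$ and $\sum_n m_\be(B_n)$ therefore converge. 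By the first Borel--Cantelli lemma, for $m_\be$-a.e. $x$ we have $x\notin A_n\cup B_n$ for all large $n$. For such $x$,
\[\liminf_{n\to\infty}\frac1n\log_\be\tau_\beta^n(x)\ge-\varepsilon\quad\text{and}\quad \liminf_{n\to\infty}\frac1n\log_\be(1-\tau_\beta^n(x))\ge-\varepsilon.\]
Now let $\varepsilon=1/k$ and intersect over $k\in\mathbb N$. This gives both limits equal to $0$ for $m_\be$-a.e. $x$.

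It remains to pass from $m_\be$ to $\cL$, and this is the step needing the most care. We need $\cL\ll m_\be$, i.e. the density is bounded away from $0$ on $[0,1]$. This holds because the $n=0$ term of \eqref{Radon} is $\chi_{[0,1]}/F_\be$ (using $\tau_\beta^0(1)=1$), so the density is at least $1/F_\be>0$ everywhere on $[0,1]$. Consequently an $m_\be$-null set is $\cL$-null, and both statements (1) and (2) hold for $\cL$-a.e. $x$.

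Alternatively, one can avoid $m_\be$ altogether and estimate $\cL(A_n)$ directly. The set $A_n$ is a union of at most (number of admissible cylinders of length $n$) $\asymp\be^n$ intervals, each of length $\be^{-n}\be^{-\varepsilon n}$. This gives the same summable bound, but the invariance argument above is cleaner.
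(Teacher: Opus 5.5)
Your proof is correct and follows essentially the same route as the paper: the first Borel--Cantelli lemma for $m_\be$, applied to $\tau_\beta^{-n}$ of a shrinking interval at $0$ (resp.\ $1$), using invariance and the upper bound on the density \eqref{Radon}, followed by transfer to $\cL$ via equivalence of the measures. The only difference is the threshold. The paper uses $1/n^2$ instead of $\be^{-\varepsilon n}$, which gives $\frac1n\log_\be\tau_\beta^n(x)\geq -2\log_\be n/n$ directly and so avoids the countable intersection over $\varepsilon=1/k$. Your explicit justification of $\cL\ll m_\be$ through the $n=0$ term of the density is a detail the paper leaves implicit.
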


\begin{proof}
(1) Note that the set of all simple numbers $x\in[0,1]$ such that $\tau_\be^n(x)=1/\be$ for some $n\geq0$ is countable, which yields the set of all non-simple numbers is a Lebesgue full set on $[0,1]$. For a non-simple number $x\in(0,1]$, we have $-\infty<\log_\be\tau_\be^n(x)\leq0$ for any $n\geq1$, which gives 
\begin{equation}
    -\infty<\frac{1}{n}\log_\be\tau_\be^n(x)\leq0\end{equation}
for $\cL$-a.e. $x\in[0,1]$ and any $n\geq0$. 

Set  
\[A_n=\bigl\{x\in[0,1]; \tau_\be^n(x)\in[0, 1/n^2]\bigr\}=\tau_\be^{-n}([0,1/n^2])\]
for $n\geq1$. Note that for any subinterval $[a,b]\subset[0,1]$ with $0\leq a<b\leq1$ we have
\[m_\be([a,b])\leq \frac{1}{F_\be}\frac{b-a}{1-1/\be}\]
by the form of the Radon-Nikodym derivative (\ref{Radon}) of $m_\be$, where $F_\be$ is the normalizing constant. Hence
\begin{align*}
    \sum_{n=1}^\infty m_\be(A_n)
    &= \sum_{n=1}^\infty m_\be(\tau_\be^{-n}([0,1/n^2]) \\
    &=\sum_{n=1}^\infty m_\be([0,1/n^2]) \\
    &\leq \frac{1}{F_\be}\frac{1}{1-1/\be}\sum_{n=1}^\infty \frac{1}{n^2}<\infty,
\end{align*}
which yields $\displaystyle{m_\be\Biggl(\bigcap_{n=1}^\infty\bigcup_{k=n}^\infty A_k\Biggr)}=0$ by the first Borel-Cantelli lemma. Since the measure $m_\be$ is equivalent to the Lebesgue measure $\cL$, we have $\displaystyle{\cL\Biggl(\bigcap_{n=1}^\infty\bigcup_{k=n}^\infty A_k\Biggr)}=0$. That is, for $\cL$-a.e. $x\in[0,1]$ there is $n_0\geq1$ such that $\tau_\be^n(x)\geq 1/n^2$ for any $n\geq n_0$, which shows 
\[\frac{1}{n}\log_\be\tau_\be^n(x)\geq \frac{-2\log_\be n}{n}\to0\]
as $n\to\infty$. This gives the conclusion.  

(2) For $x\in[0,1]$ and $n\geq1$, we know that \[\frac{1}{n}\log_\be(1-\tau_\beta^n(x))\leq0.\]

Set \[B_n=\bigl\{x\in[0,1]; \tau_\be^n(x)\in[1-1/n^2,1]\bigr\}=\tau_\be^{-n}([1-1/n^2,1])\]
for $n\geq1$. As in the same way of the proof of (1), we have $\sum_{n=1}^\infty m_\be(B_n)<\infty$. This shows $\displaystyle{m_\be\Biggl(\bigcap_{n=1}^\infty\bigcup_{k=n}^\infty B_k\Biggr)}=0$ by the first Borel-Cantelli lemma, which yields  $\displaystyle{\cL\Biggl(\bigcap_{n=1}^\infty\bigcup_{k=n}^\infty B_k\Biggr)}=0$ by the fact that $m_\be$ is equivalent to $\cL$. That is, for $\cL$-a.e. $x\in[0,1]$ there is $n_0\geq1$ such that $1-\tau_\be^n(x)\geq 1/n^2$ for any $n\geq n_0$, which shows 
\[\frac{1}{n}\log_\be(1-\tau_\beta^n(x))\geq\frac{-2\log_\be n}{n}\to0\]
as $n\to\infty$.  This gives the conclusion. 
\end{proof}

The main theorem in this section is the following:

\begin{theorem}\label{3-3}  
There is a Lebesgue full set $\cF$ in $[0,1]$ such that the function $G_\be$ is pointwise $\al$-H\"older continuous for any $\al\in(0,1)$ on $\cF$. 
\end{theorem}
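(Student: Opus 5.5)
Take $\cF$ to be the set of non-simple $x\in(0,1)$ for which both conclusions of Lemma \ref{3} hold; by Lemma \ref{3} this is a Lebesgue full set. Fix $x\in\cF$ and $\al\in(0,1)$. The goal is a constant $C=C(x,\al)$ with $|G_\be(x)-G_\be(y)|\le C|x-y|^\al$ for all $y$ near $x$.

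\textbf{Step 1: reduce to the tail after the separation time.} Use the formula of Lemma \ref{1}. Write $S_n(x)=\sum_{i=1}^n g_i(x)$ and $N=N(x,y)$. For $n<N$ the digits and partial sums of $x$ and $y$ agree, so those terms cancel. This gives
\[M_\be\bigl(G_\be(x)-G_\be(y)\bigr)=(x-y)+\sum_{n=N}^\infty\frac{g_n(x)\bigl(M_\be n-S_n(x)\bigr)-g_n(y)\bigl(M_\be n-S_n(y)\bigr)}{\be^n}.\]
Since $g_i\in\{0,1\}$, we have $|M_\be n-S_n|\le n$. Hence the tail is bounded by
\[2\sum_{n\ge N} n\be^{-n}\le C_\be N\be^{-N}.\]
It follows that $|G_\be(x)-G_\be(y)|\le |x-y|/M_\be+C'_\be N\be^{-N}$, with $N=N(x,y)$.

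\textbf{Step 2: bound $\be^{-N}$ below by a power of $|x-y|$.} This is where Lemma \ref{2} and Lemma \ref{3} combine, and it is the main point. If $y<x$, Lemma \ref{2}(1) gives $|x-y|\ge\tau_\be^N(x)\be^{-N}$. If $y>x$, Lemma \ref{2}(2) gives $|x-y|\ge(1-\tau_\be^N(x))\be^{-N}$. Fix $\varepsilon>0$. Since $x\in\cF$, Lemma \ref{3} supplies $n_0$ such that both $\tau_\be^n(x)\ge\be^{-\varepsilon n}$ and $1-\tau_\be^n(x)\ge\be^{-\varepsilon n}$ for all $n\ge n_0$. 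As $y\to x$ we have $N(x,y)\to\infty$, because $x$ is non-simple and hence lies in the interior of its cylinders. So for $y$ close enough to $x$ we have $N\ge n_0$, and then
\[|x-y|\ge\be^{-(1+\varepsilon)N}.\]

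\textbf{Step 3: conclude.} The previous inequality gives $\be^{-N}\le|x-y|^{1/(1+\varepsilon)}$. Also $N\le \log_\be(1/|x-y|)/(1+\varepsilon)$. Therefore
\[N\be^{-N}\le C\,\log(1/|x-y|)\,|x-y|^{1/(1+\varepsilon)}\le C''|x-y|^{\al}\]
for $|x-y|$ small, once $\varepsilon$ is chosen with $1/(1+\varepsilon)>\al$. Away from $x$, both $G_\be$ (which is continuous by \cite{Su}, hence bounded) and the Hölder bound are controlled trivially. The term $|x-y|/M_\be$ is already $O(|x-y|^\al)$. This yields pointwise $\al$-Hölder continuity at every $x\in\cF$, for every $\al\in(0,1)$.

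The only delicate step is Step 2. Lemma \ref{2} alone gives no lower bound on $|x-y|$ when the orbit point $\tau_\be^N(x)$ lies near an endpoint; Lemma \ref{3} is exactly what rules this out almost surely, up to a subexponential loss. That loss is absorbed because $\al<1$.
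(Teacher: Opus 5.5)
Your argument is essentially the paper's proof: Lemma \ref{1} cancels the terms before the separation time $N$, the tail is bounded by a constant times $N\be^{-N}$, and Lemmas \ref{2} and \ref{3} control $\be^{-N}$ by $|x-y|$ up to a subexponential loss. The paper packages this slightly differently. It divides by $(x-y)^\al$ directly and shows the $N$-dependent term tends to $0$, which gives a bound uniform over all $y$ rather than only nearby $y$.

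There is one slip in Step 3. The inequality $|x-y|\ge\be^{-(1+\varepsilon)N}$ gives $N\ge\log_\be(1/|x-y|)/(1+\varepsilon)$, not $\le$. Your displayed bound on $N\be^{-N}$ is still true, and there are several ways to justify it:
\begin{enumerate}
\item Since $t\mapsto t\be^{-t}$ is eventually decreasing, the correct lower bound $N\ge L:=\log_\be(1/|x-y|)/(1+\varepsilon)$ yields $N\be^{-N}\le L\be^{-L}$, and $L\be^{-L}$ is exactly your expression.
\item Because $x$ and $y$ share the first $N-1$ digits, $|x-y|\le\be^{-(N-1)}$, so $N\le 1+\log_\be(1/|x-y|)$.
\item Simplest of all, absorb the factor $N$: $N\be^{-N}\le C_\delta\be^{-(1-\delta)N}$, then choose $\delta$ and $\varepsilon$ small enough that $(1-\delta)/(1+\varepsilon)\ge\al$.
\end{enumerate}
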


\begin{proof}
    First, we show that there is a Lebesgue null set $\cN\subset[0,1]$ such that $G_\be$ is $\al$-H\"older continuous from the left for any $\al\in(0,1)$ on $[0,1]\setminus \cN$. 
    By Lemma \ref{3}(1), we can take a Lebesgue null set $\cN$ such that  
    \[\frac{1}{n}\log_{\be} \tau_\beta^n(w)\to 0\]
    as $n\to\infty$ for any $w\in[0,1]\setminus \cN$. Let $x\in(0,1)\setminus \cN$ and take  $y\in(0,1)$ so that $x>y$. We write $N=N(x,y)$ for short. By Lemmas \ref{1} and \ref{2}(1), we obtain 
    \begin{equation}\label{above1}
    \begin{split}
        \frac{|G(x_{})-G(y_{})|}{(x-y)^\al} 
        &\leq \frac{1}{(x-y)^\al M_\be}\Biggl|(x-y)+\sum_{n=N}^\infty\frac{ng_n(x)}{\be^n}\Bigl(M_\be-\frac{1}{n}\sum_{i=1}^ng_i(x)\Bigr)\\
        &-\sum_{n=N}^\infty\frac{ng_n(y)}{\be^n}\Bigl(M_\be-\frac{1}{n}\sum_{i=1}^ng_i(y)\Bigr)\Biggr|\\
        &\leq \frac{(x-y)^{1-\al}}{M_\be}+\frac{\be^{\al N}}{M_\be(\tau_\beta^N(x))^\al}\Biggl|\sum_{n=N}^\infty\frac{ng_n(x)}{\be^n}\Bigl(M_\be-\frac{1}{n}\sum_{i=1}^ng_i(x)\Bigr) \\
        &-\sum_{n=N}^\infty\frac{ng_n(y)}{\be^n}\Bigl(M_\be-\frac{1}{n}\sum_{i=1}^ng_i(y)\Bigr)\Biggr| \\
        & \leq \frac{1}{M_\be}+\frac{\be^{\al N}}{M_\be (\tau_\be^N(x))^\al}\sum_{n=N}^\infty \frac{4n}{\be^n} \\
        &\leq \frac{1}{M_\be}+\frac{K_\be}{M_\be\be^{(1-\al)N}}\frac{1}{(\tau_\beta^N(x))^\al}, 
    \end{split}
    \end{equation}
    where $K_\be$ is a positive constant independent of $N$.
    Since 
    \begin{align*}
        \log_\be\Biggl(\frac{1}{\be^{N(1-\al)}}\frac{1}{(\tau_\beta^N(x))^\al}\Biggr)&=-N(1-\al)-\al\log_\be (\tau_\beta^N(x)) \\
        &=-N\Bigl((1-\al)+\frac{\al}{N}\log_\be (\tau_\beta^N(x))\Bigr)\to-\infty
    \end{align*}
    as $N\to\infty$, we have that the rightmost side of the above inequality (\ref{above1}) is bounded above by a positive constant independent of $N$.   

Next, we show that there is a Lebesgue null set $\cN'\subset[0,1]$ such that $G_\be$ is $\al$-H\"older continuous from the right for any $\al\in(0,1)$ on $[0,1]\setminus \cN'$. 
By Lemma \ref{3} (2), we can take a Lebesgue null set $\cN'$ such that 
    \[\frac{1}{n}\log_{\be} (1-\tau_\beta^n(w))\to 0\]
    as $n\to\infty$ for any $w\in[0,1]\setminus \cN'$. Let $x\in(0,1)\setminus \cN'$ and take  $z\in(0,1)$ so that $x<z$. We write $N=N(x,z)$ for short. By Lemmas \ref{1} and \ref{2}(2), we obtain 
    \begin{equation}\label{above2}
    \begin{split}
        \frac{|G(z_{})-G(x_{})|}{(z-x)^\al}
        &\leq \frac{1}{(z-x)^\al M_\be}\Biggl|(z-x)+\sum_{n=N}^\infty\frac{ng_n(z)}{\be^n}\Bigl(M_\be-\frac{1}{n}\sum_{i=1}^ng_i(z)\Bigr)\\
        &-\sum_{n=N}^\infty\frac{ng_n(x)}{\be^n}\Bigl(M_\be-\frac{1}{n}\sum_{i=1}^ng_i(x)\Bigr)\Biggr|\\        &\leq \frac{(z-x)^{1-\al}}{M_\be}+\frac{\be^{\al N}}{M_\be(1-\tau_\beta^N(x))^\al}\Biggl|\sum_{n=N}^\infty\frac{ng_n(z)}{\be^n}\Bigl(M_\be-\frac{1}{n}\sum_{i=1}^ng_i(z)\Bigr) \\
        &-\sum_{n=N}^\infty\frac{ng_n(x)}{\be^n}\Bigl(M_\be-\frac{1}{n}\sum_{i=1}^ng_i(x)\Bigr)\Biggr| \\
        &\leq \frac{1}{M_\be}+\frac{\be^{\al N}}{M_\be(1-\tau_\be^N(x))^\al}\sum_{n=N}^\infty \frac{4n}{\be^n} \\
        &\leq \frac{1}{M_\be}+\frac{K'_\be}{M_\be\be^{(1-\al)N}}\frac{1}{(1-\tau_\beta^N(x))^\al}, 
        \end{split}
    \end{equation}
    where $K'_\be$ is a positive constant independent of $N$.
    Since 
    \begin{align*}
        &\log_\be\Bigl(\frac{1}{\be^{N(1-\al)}}\frac{1}{(1-\tau_\beta^N(x))^\al}\Bigr) \\
        &=-N(1-\al)-\al\log_\be (1-\tau_\beta^N(x)) \\
        &=-N\Bigl((1-\al)+\frac{\al}{N}\log_\be (1-\tau_\beta^N(x))\Bigr)\to-\infty
    \end{align*}
    as $N\to\infty$, we have that the rightmost side of the above inequality (\ref{above2}) is bounded above by a positive constant independent of $N$.

We have the conclusion by taking $\cF=(0,1)\setminus (\cN\cup\cN')$. 
\end{proof}

\section{Non-Lipschitz continuity}

This section is devoting to showing the following theorem:
\begin{theorem}\label{4-1}
    The function $G_\be(x)$ is not pointwise Lipschitz continuous at $\cL$-a.e. $x\in[0,1]$. 
\end{theorem}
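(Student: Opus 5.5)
Following the strategy announced in the introduction, for $\cL$-a.e. $x$ I will build points $x_N\to x$ along which the difference quotient of $G_\be$ blows up. The mechanism is that, by Lemma \ref{1}, the increment of $G_\be$ between two points sharing the first $N$ digits is governed by the centred digit sum
\[S_N(x)=\sum_{i=1}^N\bigl(g_i(x)-M_\be\bigr),\]
which is typically of size $\sqrt N$ by the central limit theorem, while the increment of $x$ is only of size $\be^{-N}$.

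\emph{Step 1 (choice of comparison points).} For a non-simple $x$ and $N\ge1$, I let $x_N=\sum_{n=1}^N g_n(x)\be^{-n}$ be the left endpoint of the $N$-th cylinder of $x$, so that $x-x_N=\tau_\be^N(x)\be^{-N}$. The points $x_N$ and $x$ share the digits $g_1,\dots,g_N$, and $x_N$ has all later digits zero; this is admissible because $x_N$ is a left endpoint of a cylinder. Substituting into Lemma \ref{1}, the terms with $n\le N$ cancel, and
\[
M_\be\bigl(G_\be(x)-G_\be(x_N)\bigr)=(x-x_N)+\sum_{n>N}\frac{g_n(x)}{\be^n}\Bigl(M_\be n-\sum_{i=1}^n g_i(x)\Bigr).
\]
I write $M_\be n-\sum_{i\le n}g_i=-S_N(x)+\bigl(M_\be(n-N)-\sum_{i=N+1}^n g_i\bigr)$. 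The first piece contributes $-S_N(x)\,\tau_\be^N(x)\be^{-N}$. The second piece is bounded by $\sum_{k\ge1}k\be^{-N-k}\le C\be^{-N}$. Hence
\[
\frac{|G_\be(x)-G_\be(x_N)|}{x-x_N}\ \ge\ \frac{1}{M_\be}\Bigl(|S_N(x)|-1-\frac{C}{\tau_\be^N(x)}\Bigr).
\]
This avoids comparing $x_{N+1}$ with $x_N$; comparing with $x$ itself is what pointwise Lipschitz continuity at $x$ requires.

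\emph{Step 2 (probabilistic input).} I need, for a.e. $x$, infinitely many $N$ with $|S_N(x)|\ge L$ and $\tau_\be^N(x)\ge\delta$, for every $L$ and some fixed $\delta>0$. Both conditions are needed: $|S_N|$ large makes the main term grow, and $\tau_\be^N(x)\ge\delta$ keeps the error $C/\tau_\be^N(x)$ bounded. I will use the central limit theorem for $\tau_\be$ cited in the introduction \cite{Bo-Go, Ho-Ke}, applied to the bounded-variation observable $g_1-M_\be$. I assume its variance is positive: $g_1$ is not a coboundary, since a coboundary would force degenerate periodic sums, and this can be checked on the fixed point $0$, where $g_1=0$, versus another periodic orbit containing the digit $1$.

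Fix $L$ and $\delta=1/2$. Let
\[
E_N=\{|S_N|\ge L\}\cap\{\tau_\be^N\ge 1/2\}.
\]
I will show $m_\be(E_N)\ge c>0$ for all large $N$. The plan is to use a CLT jointly with the endpoint value, i.e. asymptotic independence of $S_N/\sqrt N$ and $\tau_\be^N$, which follows from the spectral or exactness approach: the CLT conditioned on a set of positive measure, or mixing applied to $S_{N-k}$ against $\tau_\be^{-N}[1/2,1]$. As a cruder alternative I can compare with $S_{N-k}$ for a large fixed $k$, since the last $k$ digits change $S$ by at most $k$. Then $\limsup_N E_N$ has measure at least $c$ by reverse Fatou. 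The event $\{x\in\limsup E_N \text{ for all } L\in\mathbb N\}$ is invariant up to null sets, because shifting $x$ by one step changes $S_N$ by $O(1)$. Ergodicity (in fact exactness) of $(\tau_\be,m_\be)$ then upgrades it to full measure, and equivalence of $m_\be$ with $\cL$ transfers the conclusion to $\cL$-a.e. $x$.

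\emph{Step 3 (conclusion and obstacle).} For such $x$, along the chosen $N$ the quotient is at least $(L-1-2C)/M_\be$ with $x_N\to x$, and $L$ is arbitrary, so $G_\be$ is not Lipschitz at $x$. The main obstacle is Step 2: the $0$–$1$ law argument, meaning precisely verifying invariance of the limsup event, and the nondegeneracy of the variance. If the joint control with $\tau_\be^N$ proves awkward, I would instead take the right-hand comparison point (the right endpoint of the cylinder, via Lemma \ref{2}(2)). At least one of $\tau_\be^N(x)$ and $1-\tau_\be^N(x)$ is at least $1/2$, so the error is bounded on one side without needing any independence.
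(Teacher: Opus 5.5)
Your proof is correct, but it compares different points than the paper does. The paper never compares $x$ with a single truncation. It works only at the positions $l(N)$ of the digit $1$ and compares the consecutive simple points $x_N<x_{N+1}$, which differ only in the digit $l(N+1)$. From the series defining $G_\be$ it gets the exact identity
\[
\frac{G_\be(x_{N+1})-G_\be(x_N)}{x_{N+1}-x_N}=\frac{1-S_{l(N+1)}(x)}{M_\be},
\]
with no error term. The Lipschitz hypothesis at $x$ enters only through a triangle inequality, using $|x-x_N|\le \be^{-l(N+1)}/(1-1/\be)$. The probabilistic input is just the one-sided statement $\liminf_n S_n(x)=-\infty$ a.e. (Lemma \ref{4-3}).

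You compare $x$ directly with the left endpoint of its $N$-cylinder. This bounds the difference quotient at $x$ itself, but it produces the error $C/\tau_\be^N(x)$ and so forces a joint event. The two routes are closely linked: whenever $g_{N+1}(x)=1$ one has $\tau_\be^N(x)\ge 1/\be$, so the paper's digit-$1$ positions are exactly where your error term is automatically bounded.

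This makes your Step 2 easier than you planned. From any $n_0$ with $S_{n_0}(x)\le -L$, pass to the first $N\ge n_0$ with $g_{N+1}(x)=1$. The intervening zero digits only decrease $S$, so $S_N(x)\le -L$ and $\tau_\be^N(x)\ge 1/\be$.

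Alternatively, for fixed $L$ the CLT with $v>0$ gives $m_\be(|S_N|<L)\to 0$. A union bound then yields $m_\be(E_N)\ge m_\be([1/2,1])/2$ for large $N$, with no asymptotic independence needed. This constant does not depend on $L$. You need that uniformity, though you did not state it, for $\bigcap_L\limsup_N E_N$ to have positive measure. Its invariance is exact, since $S_N(\tau_\be x)=S_{N+1}(x)-g_1(x)+M_\be$. Either way the right-endpoint fallback is unnecessary, and it would be awkward because cylinders of $\tau_\be$ need not be full.

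For $v>0$, the paper argues via weak mixing: a coboundary would make $e^{-2\pi i M_\be}\neq 1$ an eigenvalue of the Koopman operator. This avoids evaluating an a.e. coboundary identity at individual points. Your periodic-orbit route also works if you take right limits of the BV function at the fixed point $0$, which gives $c=0$, and then integrate against $m_\be$, which gives $c=M_\be>0$. No second periodic orbit is required.
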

To show this we apply the central limit theorem to the dynamics $(\tau_\beta,m_\be)$, which is established for piecewise $C^2$ expanding maps (see e.g., \cite{Ho-Ke}). Here we refer to the statement in \cite{Bo-Go}.

\begin{theorem}[Application of Theorem 8.5.1 in \cite{Bo-Go}]\label{4-2}
    Let $f$ be a function of bounded variation on $[0,1]$ and assume that $f$ is not of the form $f=c+\varphi\circ\tau_\beta-\varphi$, where $\varphi$ is of bounded variation and $c$ is a constant. Then 
    \[v^2:=\lim_{n\to\infty}\int_{0}^1 \Biggl(\frac{\sum_{i=0}^{n-1}f\circ\tau_\beta^i-n\cdot m_\be(f)}{\sqrt{n}}\Biggr)^2d m_\be>0\]
    and 
    \[\lim_{n\to\infty}m_\be\Biggl(\biggr\{\frac{\sum_{i=0}^{n-1}f\circ\tau_\beta^i-n\cdot m_\be(f)}{v\sqrt{n}}\leq r\biggr\}\Biggr)=\frac{1}{\sqrt{2\pi}}\int_{-\infty}^xe^{-\frac{t^2}{2}}dt\]
    for $x\in\R$. 
\end{theorem}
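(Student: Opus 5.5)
Theorem \ref{4-2} is a direct specialization of the central limit theorem in \cite[Theorem 8.5.1]{Bo-Go}. The plan is to check that the beta-map $\tau_\beta$ and its invariant measure $m_\be$ satisfy that theorem's hypotheses, and then to read off the two conclusions with $f$ replaced by $f-m_\be(f)$. The statement of Theorem 8.5.1 requires three things. First, $\tau$ is a piecewise $C^2$ map of $[0,1]$ with $\inf|\tau'|>1$. Second, $\mu$ is an absolutely continuous invariant probability measure whose density is bounded and bounded away from zero on its support. Third, $(\tau,\mu)$ is weakly mixing, or exact.

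I would carry out the verification in the following order.

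\emph{The map.} For $1<\be\leq 2$ the map $\tau_\beta$ has at most two branches, $[0,1/\be)$ and $[1/\be,1]$. On each branch it is affine with slope $\be>1$. So it is piecewise $C^2$ with $|\tau_\beta'|\equiv\be>1$, and its second derivative vanishes, so any distortion condition holds trivially.

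\emph{The density.} The formula (\ref{Radon}) gives a lower bound $h:=dm_\be/d\cL\geq 1/F_\be$, coming from the $n=0$ term $\chi_{[0,1]}$. It also gives an upper bound $h\leq 1/(F_\be(1-1/\be))$, as already used in the proof of Lemma \ref{3}. Hence $h$ is bounded, bounded away from $0$ on all of $[0,1]$, and of bounded variation. In particular $m_\be$ and $\cL$ are equivalent, and $m_\be$ is the unique absolutely continuous invariant measure.

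\emph{Mixing.} Exactness of $(\tau_\beta,m_\be)$, quoted in Section 2 from \cite{Bo, Sm}, implies mixing and hence weak mixing.

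With these facts in hand, the Lasota--Yorke inequality for the Frobenius--Perron operator on $BV$ holds with contraction factor $2/\be$. If $2/\be\geq 1$, one applies it to an iterate $\tau_\beta^k$ with $\be^k>2$, which is the standard device in \cite{Bo-Go}. This yields quasi-compactness on $BV$, and exactness makes $1$ a simple eigenvalue and the only peripheral eigenvalue. This spectral gap is exactly what Theorem 8.5.1 uses internally, so nothing more needs to be proved here.

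Next I would apply \cite[Theorem 8.5.1]{Bo-Go} to $\tilde f=f-m_\be(f)$, which lies in $BV$ and has mean zero. It gives that the limit
\[
v^2=\int_0^1\tilde f^2\,dm_\be+2\sum_{k\geq1}\int_0^1\tilde f\cdot\tilde f\circ\tau_\beta^k\,dm_\be
\]
exists, with the series converging exponentially by the spectral gap. It also gives that the normalized Birkhoff sums converge in distribution to $N(0,v^2)$ whenever $v>0$. Since $S_n\tilde f=\sum_{i=0}^{n-1}f\circ\tau_\beta^i-n\,m_\be(f)$, this is precisely the displayed convergence in Theorem \ref{4-2}. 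The upper limit of the Gaussian integral should read $r$ rather than $x$.

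The step I expect to need the most care is the positivity $v^2>0$ under the non-coboundary assumption. The cited theorem characterizes $v=0$ by $\tilde f=\psi\circ\tau_\beta-\psi$ for some $\psi\in L^2(m_\be)$. I would first check whether \cite{Bo-Go} already gives $\psi\in BV$. If it only gives $\psi\in L^2$, I would upgrade $\psi$ to $BV$ by Gordin's construction $\psi=\sum_{k\geq1}P^k\tilde f$, where $P$ is the transfer operator with respect to $m_\be$. This series converges in $BV$ because the spectral gap makes $\|P^k\tilde f\|_{BV}$ decay exponentially, and because $h$ is bounded away from zero, so passing between the Lebesgue and $m_\be$ transfer operators preserves $BV$. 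The coboundary relation holds in $L^2(m_\be)$, so $\psi$ can be chosen $BV$ and the identity holds $m_\be$-a.e. Then $f=m_\be(f)+\psi\circ\tau_\beta-\psi$ with $\psi$ of bounded variation, which is excluded by hypothesis. Hence $v^2>0$.
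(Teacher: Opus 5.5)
Your proposal is correct and follows the paper's approach: the paper gives no separate argument and simply invokes Theorem 8.5.1 of Boyarsky--G\'ora for $\tau_\beta$. You spell out what that citation needs: the hypothesis check (affine branches of slope $\be>1$, the Parry density bounded above and below, exactness of $(\tau_\beta,m_\be)$) and the upgrade of an $L^2$ transfer function to one of bounded variation when $v=0$. You are also right that the upper limit of the Gaussian integral should be $r$ rather than $x$.
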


Let $S$ be the set of all simple numbers, i.e., the set of all $x\in[0,1]$ whose orbit by $\tau_\be$  eventually falls into $1/\be$. Note that $S$ is countable, which yields $\cL(S)=0$. For $x\in(0,1]\setminus S$, set $l(0)=0$ and $l(N)=\min\{n>l(N-1);\ g_n(x)=1\}$ for $N\geq1$. Since there are infinitely many $m\geq1$ such that $g_m(x)=1$, the sequence $(l(N))_{N=1}^\infty$ of positive integers is well-defined and strictly increasing. For $N\geq1$ let $x_N\in[0,1]$ be given by $x_N=\sum_{n=1}^{l(N)}g_n(x)/\be^n$. By definition $x_N<x_{N+1}$ for any $N\geq1$ and $x_N\to x$ as $N\to\infty$.

\begin{lemma}\label{4-3}
    For $\cL$-a.e. $x\in [0,1]\setminus S$, we have
    \[\limsup_{N\to\infty}\Biggl(l(N)-\frac{\sum_{i=1}^{l(N)}g_i(x)}{M_\be}\Biggr)=\infty.\]
\end{lemma}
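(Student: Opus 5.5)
The plan is to apply the central limit theorem (Theorem \ref{4-2}) to $f=g_1=\chi_{[1/\be,1]}$. The key observation is that $g_{i+1}(x)=g_1(\tau_\be^i(x))$, so $\sum_{i=1}^n g_i(x)=\sum_{i=0}^{n-1}g_1\circ\tau_\be^i(x)$. Since $m_\be(g_1)=M_\be$, we get
\[
n-\frac{1}{M_\be}\sum_{i=1}^n g_i(x)=-\frac{1}{M_\be}\Bigl(\sum_{i=0}^{n-1}g_1\circ\tau_\be^i(x)-nM_\be\Bigr).
\]
It therefore suffices to show two things for $\cL$-a.e. $x$: first, $S_n(x):=\sum_{i=0}^{n-1}g_1\circ\tau_\be^i(x)-nM_\be$ satisfies $\liminf_n S_n(x)=-\infty$; second, this can be realized along the subsequence $n=l(N)$.

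\emph{Step 1: non-degeneracy.} We check that $g_1$ is not of the form $c+\varphi\circ\tau_\be-\varphi$ with $\varphi$ of bounded variation. Suppose it were. Integrating against $m_\be$ gives $c=M_\be\in(0,1)$. Now take the fixed point $0$ of $\tau_\be$; its orbit stays at $0$, so $g_1(0)=0$. The coboundary relation at $0$ would then force $0=c+\varphi(0)-\varphi(0)=c>0$, a contradiction. This requires the relation to hold at every point, not merely a.e. I would handle this by taking a BV representative and using one-sided limits at the fixed point $0$; the pointwise identity holds on a dense set, and right limits are compatible since $\tau_\be$ is continuous and increasing near $0$. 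Alternatively one can use periodic orbits of different digit averages, for instance $0$ versus points whose orbits nearly stay in $[1/\be,1]$, or cite the standard fact that a nontrivial indicator of this kind is not a coboundary. I expect this justification to be the main technical obstacle. Granting it, Theorem \ref{4-2} gives $v>0$ and the CLT.

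\emph{Step 2: $\liminf S_n/\sqrt n=-\infty$ a.e.} Fix $K>0$. The event $E=\{\liminf_n S_n/\sqrt n<-K\}$ satisfies $E\supset\limsup_n\{S_n<-K\sqrt n\}$, so $m_\be(E)\ge\limsup_n m_\be(S_n<-K\sqrt n)=\Phi(-K/v)>0$. The quantity $\liminf_n S_n/\sqrt n$ is $\tau_\be$-invariant, because $S_n(\tau_\be x)$ and $S_{n+1}(x)$ differ by a bounded amount and $\sqrt{n+1}/\sqrt n\to1$. By ergodicity, $m_\be(E)=1$. Intersecting over $K\in\mathbb N$ shows that $S_n\le -K\sqrt n$ infinitely often for every $K$, for $m_\be$-a.e. $x$, and hence for $\cL$-a.e. $x$ by equivalence of the measures.

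\emph{Step 3: passing to $l(N)$.} For an $n$ with $S_n(x)$ very negative, let $l(N)$ be the largest index $\le n$ with $g_{l(N)}(x)=1$. The digits between $l(N)+1$ and $n$ are all $0$, so $\sum_{i=1}^{l(N)}g_i=\sum_{i=1}^n g_i$. Then
\[
l(N)-\frac{1}{M_\be}\sum_{i=1}^{l(N)}g_i=n-\frac{1}{M_\be}\sum_{i=1}^n g_i-(n-l(N)).
\]
The gap $n-l(N)$ is at most the length of the run of zero digits, and this is controlled by Lemma \ref{3}(1). A run of $k$ zeros starting after position $l(N)$ forces $\tau_\be^{l(N)}(x)<\be^{-k}$, and eventually $\tau_\be^m(x)\ge 1/m^2$, so the gap is $O(\log n)$. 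Since $-S_n/M_\be\ge K\sqrt n/M_\be\gg\log n$, the displayed quantity tends to $\infty$ along these $N$. This gives the $\limsup=\infty$ claimed in Lemma \ref{4-3}.
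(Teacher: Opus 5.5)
Your argument is correct. Its skeleton matches the paper's: the CLT for $\chi_{[1/\be,1]}$, a zero--one law for an exactly $\tau_\be$-invariant event, and then passage to the times $l(N)$. Two steps, however, are done differently.

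\emph{Non-degeneracy.} The paper exponentiates the coboundary relation, so that $e^{2\pi i\psi}$ would be an eigenfunction of the Koopman operator with eigenvalue $e^{-2\pi i M_\be}\neq 1$, which weak mixing rules out. That argument needs only measurability of $\psi$, but it relies on exactness and a cited spectral fact. Your fixed-point argument is more elementary and goes through as you sketch. On $(0,1/\be)$ the relation reads $0=c+\varphi(\be x)-\varphi(x)$ for a.e.\ $x$. Letting $x\downarrow 0$ through such points, the right limit $\varphi(0^+)$ of a BV representative forces $c=0$, contradicting $c=M_\be>0$. So this step is not really an obstacle.

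\emph{Passage to $l(N)$.} This is the more substantial difference. You move backward from $n$ to the last digit $1$, which costs exactly the length of the zero run. That is what forces you to work at scale $\sqrt{n}$. It also forces you to use the bound $\tau_\be^m(x)\ge 1/m^2$, which comes from the Borel--Cantelli argument inside the proof of Lemma \ref{3}(1). The statement of that lemma alone gives only an $o(n)$ gap, which would not beat $\sqrt{n}$.

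The paper instead moves forward to the first $l(N_L)>n_0$. Each intervening zero digit decreases $\sum_i(g_i-M_\be)$ by $M_\be$, and the digit $1$ at $l(N_L)$ raises it by at most $1$. So the loss is bounded, and $\liminf_n\sum_{i\le n}(g_i-M_\be)=-\infty$ already suffices, with no control of zero runs. The paper's route is therefore shorter. Yours, through Step 2, yields the stronger information that the deviation reaches scale $\sqrt{n}$ along the $l(N)$.

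Two small corrections. Define $E$ by $\liminf_n S_n/\sqrt{n}\le -K$, because $S_n<-K\sqrt{n}$ infinitely often does not give a strict inequality. Also, one fixed $K>0$ already suffices for Step 3, since $\sqrt{n}\gg\log n$, so the intersection over $K$ is unnecessary.
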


\begin{proof}
    Set 
\[E_{\infty}=\Bigl\{x\in[0,1]\ ;\  \liminf_{N\to\infty}\sum_{i=1}^{N}(g_i(x)-M_\be)=-\infty\Bigr\}\]
and 
\[E_m=\Bigl\{x\in[0,1]\ ;\  \sum_{i=1}^{m}(g_i(x)-M_\be)\leq-\sqrt{mv_0^2}\Bigr\},\]
for $m\geq1$, where $v_0^2$ is the variation of $\chi_{[1/\be,1]}$:
\[v_0^2=\lim_{n\to\infty}\int_{0}^1 \Biggl(\frac{\sum_{i=0}^{n-1}\chi_{[1/\be,1]}\circ\tau_\beta^i-nM_\be}{\sqrt{n}}\Biggr)^2dm_\be.\]
We note that $\displaystyle{E_{\infty}\supset\bigcap_{n=1}^\infty\bigcup_{m=n}^\infty E_m}$.
Since the measurable dynamics $(\tau_\beta,m_\be)$ is ergodic and $\tau_\beta^{-1}E_{\infty}=E_{\infty}$, we have $m_\be(E_{-\infty})\in\{0,1\}$.  
In fact, the indicator function $\chi_{[1/\be,1]}$ is not of the form 
$\chi_{[1/\be,1]}=c+\psi\circ\tau_\beta-\psi$, where $\psi$ is of bounded variation. If such a function $\psi$ exists, we have that $c=M_\be$ and 
\begin{align*}
    \exp(2\pi i\psi\circ\tau_\beta)
    &=\exp(2\pi i \chi_{[1/\be,1]})\cdot \exp(-2\pi i M_\be)\cdot\exp(2\pi i\psi) \\
    &=\exp(-2\pi i M_\be)\cdot\exp(2\pi i\psi),
\end{align*}
which shows that $\exp(-2\pi i \cdot M_\be)$ is an eigenvalue of the Koopman operator defined by $Uf=f\circ\tau_\beta$ for $f\in L^2(m_\be)$. Since $(\tau_\beta, m_\be)$ is exact (see e.g., \cite{Bo, Sm}), in particular, it is weakly mixing. This yields that the Koopman operator has $1$ as its eigenvalue and there is no eigenvalue on the unit circle except $1$ (see \cite{Wa}), which yields $M_\be\in\{0,1\}$. By the fact that $m_\be$ is equivalent to the Lebesgue measure and has full support on $[0,1]$, however, we know that $M_\be\in(0,1)$, which yields a contradiction. 

By Theorem \ref{4-2},
we obtain that there is a positive integer $n_0$ such that 
\[m_\be(E_{n})>\frac{1}{2}\int_{-\infty}^{-1}\ e^{-\frac{t^2}{2}}dt>0\]
for $n\geq n_0$. Hence 
\[m_\be(E_{\infty})\geq \liminf_{n\to\infty}m_\be\Bigl(\bigcup_{m=n}^\infty E_m\Bigr)\geq \frac{1}{2}\int_{-\infty}^{-1}e^{-\frac{t^2}{2}}dt>0,\]
which yields $m_\be(E_{\infty})=1$. Together with that $m_\be$ is equivalent to $\cL$, we obtain $\cL(E_\infty)=1$. 

Let $x\in E_{\infty}\setminus S$. Then for any positive integer $L$ there is a positive integer $n_0$ such that 
\[\sum_{i=1}^{n_0}(g_i(x)-M_\be)\leq -L.\]
Let $N_L$ be the minimal positive integer such that $l(N_L)>n_0$. Since $g_i(x)=0$ for $i\notin(l(N))_{N=1}^\infty$ we have

\begin{align*}
&\sum_{i=1}^{l(N_L)}(g_i(x)-M_\be) \\
&=\sum_{i=1}^{n_0}(g_i(x)-M_\be)+\sum_{i=n_0+1}^{l(N_L)}(g_i(x)-M_\be) \\
&\leq -L+1-(l(N_L)-n_0)M_\be\leq -L+1,
\end{align*}
which yields 
\begin{align*}
\limsup_{N\to\infty}\Biggl(l(N)-\frac{\sum_{i=1}^{l(N)}g_i(x)}{M_\be}\Biggr)&=-\frac{1}{M_\be}\liminf_{N\to\infty}\Biggl(\sum_{i=1}^{l(N)}(g_i(x)-M_\be)\Biggr) \\
&= \infty. 
\end{align*}
This gives the conclusion.  
\end{proof}

\begin{proof}[Proof of Theorem \ref{4-1}]
By Lemma \ref{4-3}, we have 
\begin{equation}\label{supL}
\limsup_{N\to\infty}\Biggl(l(N)-\frac{\sum_{i=1}^{l(N)}g_i(x)}{M_\be}\Biggr)=\infty
\end{equation}
for $x\in[0,1]\setminus (S\cup \cN)$, where $\cN$ is a Lebesgue null set and $(l(N))_{N=0}^\infty$ is a sequence of non-negative integers given by $l(0)=0$ and $l(N)=\min\{n>l(N-1);\ g_n(x)=1\}$ for $N\geq1$.
Assume that $G(x)$ is Lipschitz continuous at $x\in[0,1]\setminus(S\cup \cN)$. Then there is a positive constant $K$ such that 
\[\frac{|G_\be(x_N)-G_\be(x)|}{|x_N-x|}\leq K\]
for any $N\geq1$, where $x_N=\sum_{n=1}^{l(N)}g_n(x)/\be^n$. 

On the one hand, we have 
\begin{align*}
    \frac{|G_\be(x_{N+1})-G_\be(x_{N})|}{|x_{N+1}-x_N|}
    &\leq\frac{|G_\be(x_{N+1})-G_\be(x_{})|}{|x_{N+1}-x_N|}+\frac{|G_\be(x_{N})-G_\be(x_{})|}{|x_{N+1}-x_N|} \\ \notag
    &\leq \frac{|x_{N+1}-x|}{|x_{N+1}-x_N|}K+\frac{|x_{N}-x|}{|x_{N+1}-x_N|}K \\ \notag
    &\leq \frac{1}{1-1/\be}\frac{\be^{l(N+1)}}{\be^{l(N+2)}}K+\frac{1}{1-1/\be}\frac{\be^{l(N+1)}}{\be^{l(N+1)}}K \\
    &\leq \frac{2}{1-1/\be}K, 
\end{align*}
which shows $|G_\be(x_{N+1})-G_\be(x_{N})|/|x_{N+1}-x_N|$ is bounded for $N\geq1$. 
On the other hand, since
\begin{align*}
    \frac{|G_\be(x_{N+1})-G_\be(x_{N})|}{|x_{N+1}-x_N|}
    &=\be^{l(N+1)}\cdot\frac{1}{\be^{l(N+1)}}\Biggl(l(N+1)-\frac{\sum_{i=1}^{l(N+1)-1}g_i(x)}{M_\be}\Biggr) \\
    &=l(N+1)-\frac{\sum_{i=1}^{l(N+1)}g_i(x)}{M_\be}+\frac{1}{M_\be}, 
\end{align*}
we obtain 
\[\limsup_{N\to\infty}\frac{|G(x_{N+1})-G(x_{N})|}{|x_{N+1}-x_N|}=\infty\]
by (\ref{supL}), which yields a contradiction. 
Since the set $S\cup\cN$ is a Lebesgue null set on $[0,1]$, we have the desired result.
\end{proof}

Acknowledgments: This work was supported by JSPS KAKENHI Grant Number 24K16932.

\bibliographystyle{amsplain}

\end{document}